\newcommand{\diag}{\mathop{\rm diag}}
\newcommand{\Cof}{\mathop{\rm Cof}}
\newcommand{\cof}{\mathop{\rm cof}}
\newtheorem{theorem}{\bf  Theorem}
\newtheorem{lemma}{\bf  Lemma}
\begin{document}


\begin{center}
Spectral Factorization of Rank-Deficient Polynomial
Matrix-Functions
\\[5mm]

 L.~Ephremidze
  and  E.~Lagvilava
        \end{center}

\vskip+0.5cm

 \noindent {\small {\bf Abstract.} A spectral factorization theorem
is proved for polynomial rank-deficient matrix-functions. The
theorem is used to construct paraunitary matrix-functions with
first rows given.}

 \vskip+0.2cm\noindent  {\small {\bf Keywords:} Matrix spectral factorization,
paraunitary matrix-polynomials, wavelets}

\vskip+0.2cm \noindent  {\small {\bf  AMS subject classification
(2010):} 47A68,
 42C40}

\vskip+0.5cm

Wiener's spectral factorization theorem [12], [4] for polynomial
matrix-functions asserts that if
\begin{equation}
 S(z)=\sum_{n=-N}^N C_nz^n
\end{equation}
is an $m\times m$ matrix-function ($C_n\in \mathbb{C}^{m \times
m}$ are matrix coefficients) which is positive definite for a.a.
$z\in \mathbb{T}$, $\mathbb{T}:=\{z\in \mathbb{C}: |z|=1\}$, then
it admits a factorization
\begin{equation}
 S(z)=S^+(z)S^-(z) = \sum_{n=0}^N A_nz^n\cdot \sum_{n=0}^N
 A^*_nz^{-n},\;\;\;\;\;\;z\in\mathbb{C}\backslash\{0\},
\end{equation}
where $S^+$ is an $m\times m$ polynomial  matrix-function which is
nonsingular inside $\mathbb{T}$, $\det\,S^+(z)\not=0$ when
$|z|<1$, and $S^-$ is its adjoint, $A_n^*={\overline{A}_n}^T$,
$n=0,1,\ldots,N$. (Respectively, $S^-$ is analytic and nonsingular
outside $\mathbb{T}$.) $S^+$ is unique up to a constant right
unitary multiplier.

The factorization (2) is also known under the name of
matrix-valued Fej\'{e}r-Riesz theorem and its simple proof is
provided in [2]. Various practical applications of this theorem in
system analysis [6] and wavelet design [1] are widely recognized.

In the present paper we consider rank-deficient matrix polynomials
and prove the corresponding spectral factorization theorem for
them:

\begin{theorem} Let $S(z)$ be an $m\times m$ $($trigonometric$)$
polynomial matrix-function $(1)$ of order $N$
$(${}$C_N=C_{-N}^*\not=\mathbf{0}${}$)$ which is nonnegative
definite and of rank $k\leq m$ for a.a. $z\in \mathbb{T}$. Then
there exists a unique $($up to a $k\times k$ unitary matrix right
multiplier$)$ $m\times k$ matrix-polynomial $S^+(z)=\sum_{n=0}^N
A_nz^n$, $A_n\in \mathbb{C}^{m \times k}$ of order $N$
$(${}$A_N\not=\mathbf{0}${}$)$, which is of full rank $k$ for each
$z$ inside $\mathbb{T}$, such that $(2)$ holds.
\end{theorem}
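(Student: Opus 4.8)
The plan is to reduce the statement to the nonsingular case, i.e.\ to Wiener's theorem quoted in (2), by a regularization-and-limit argument. First I would set $S_\varepsilon(z)=S(z)+\varepsilon I_m$ for $\varepsilon>0$. Since $S(z)\ge\mathbf 0$ on $\mathbb{T}$, each $S_\varepsilon$ is positive definite on $\mathbb{T}$ and is again a trigonometric polynomial of order $N$ (the extreme coefficients $C_{\pm N}$ are unchanged). Wiener's theorem then yields an $m\times m$ polynomial factor $S_\varepsilon^+(z)=\sum_{n=0}^N A_n^{(\varepsilon)}z^n$ with $\det S_\varepsilon^+\neq 0$ for $|z|<1$ and $S_\varepsilon=S_\varepsilon^+(S_\varepsilon^+)^*$. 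Exploiting the freedom of a right unitary multiplier, I would normalize so that $A_0^{(\varepsilon)}=S_\varepsilon^+(0)$ is Hermitian positive definite; by uniqueness of the polar decomposition of the invertible matrix $A_0^{(\varepsilon)}$ this pins down $S_\varepsilon^+$ uniquely.

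Second, I would obtain uniform bounds on the coefficients. Comparing the constant ($z^0$) Fourier coefficients on both sides of $S_\varepsilon=S_\varepsilon^+(S_\varepsilon^+)^*$ gives $\sum_{n=0}^N A_n^{(\varepsilon)}(A_n^{(\varepsilon)})^*=C_0+\varepsilon I$, whence every $\|A_n^{(\varepsilon)}\|$ is bounded independently of $\varepsilon$. By compactness I would pass to a subsequence $\varepsilon_j\to 0$ with $A_n^{(\varepsilon_j)}\to A_n$ for each $n$ and set $\widetilde S^+(z)=\sum_{n=0}^N A_nz^n$. Passing to the limit coefficientwise gives $\widetilde S^+(\widetilde S^+)^*=S$. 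Since each $\det S_{\varepsilon_j}^+$ is zero-free in the disc and the coefficients converge, Hurwitz's theorem forces $\det\widetilde S^+$ to be either zero-free there or identically zero; because $\operatorname{rank}S=k<m$ and $\operatorname{rank}\bigl(\widetilde S^+(\widetilde S^+)^*\bigr)=\operatorname{rank}\widetilde S^+$ pointwise, the latter holds and $\widetilde S^+$ has rank $k$ for a.a.\ $z$. Moreover the extreme coefficients satisfy $A_NA_0^*=C_N\neq\mathbf 0$, which will yield $A_N\neq\mathbf 0$ once the final $m\times k$ factor is in hand.

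Third — and this is the step I expect to be the main obstacle — I must compress the $m\times m$, rank-$k$ limit factor $\widetilde S^+$ to an honest $m\times k$ polynomial factor $S^+$ that has full column rank throughout $|z|<1$ and still satisfies $S^+(S^+)^*=S$. A constant unitary right multiplier cannot accomplish this, because the column space of $\widetilde S^+(z)$ varies with $z$; what is needed is a $k\times k$, polynomial, paraunitary correction $U(z)$ (with $U(z)U(z)^*=I$ on $\mathbb{T}$), or equivalently the extraction of the outer part in an inner--outer factorization of $\widetilde S^+$. I would build such a compression by selecting, via a maximal $k\times k$ minor of $\widetilde S^+$ that is not identically zero, $k$ generating columns, and then removing the inner (zero-inside) factors responsible for any residual rank drop inside the disc. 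Guaranteeing simultaneously that the result is polynomial of degree exactly $N$, has full rank for \emph{every} $|z|<1$, and preserves $S^+(S^+)^*=S$ is the delicate part, and is precisely where the rank-deficient case departs from the classical one.

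Finally, for uniqueness I would argue as in the nonsingular case. If $S^+(S^+)^*=\widehat S^+(\widehat S^+)^*$ with both factors $m\times k$, polynomial and of full column rank inside $\mathbb{T}$, then on $\mathbb{T}$ the two factors have the same column space (namely the range of $S$), so $\widehat S^+=S^+U$ for the measurable $k\times k$ unitary-valued $U=\bigl((S^+)^*S^+\bigr)^{-1}(S^+)^*\widehat S^+$. Using that $S^+$ and $\widehat S^+$ are full rank inside $\mathbb{T}$, one checks that $U$ extends to a bounded analytic function of $z$ whose inverse is also bounded and analytic there; since $U$ is unitary on $\mathbb{T}$, a maximum-principle (Liouville) argument forces $U$ to be a constant unitary matrix, which is the asserted uniqueness.
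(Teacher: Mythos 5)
Your overall strategy (regularize with $S_\varepsilon=S+\varepsilon I_m$, factor by Wiener's theorem, bound the coefficients, pass to a limit) is sound as far as it goes and does produce an $m\times m$ polynomial $\widetilde S^+$ with $\widetilde S^+(\widetilde S^+)^*=S$, but the theorem is not yet proved at that point, and the step you yourself flag as ``the main obstacle'' is exactly where the proposal has a genuine gap. Your plan for compressing $\widetilde S^+$ to an $m\times k$ factor --- selecting $k$ generating columns via a maximal $k\times k$ minor --- does not even preserve the factorization: if $P$ is the $m\times k$ column-selection matrix, then $(\widetilde S^+P)(\widetilde S^+P)^*=\widetilde S^+PP^*(\widetilde S^+)^*$, which is not $\widetilde S^+(\widetilde S^+)^*$ because $PP^*\neq I_m$. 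What is actually required is a factorization $\widetilde S^+=S^+W$ with $W$ a $k\times m$ co-isometry on $\mathbb{T}$ ($WW^*=I_k$) such that $S^+$ comes out polynomial of order exactly $N$ and of full column rank at \emph{every} point of the open disc; constructing that rectangular inner--outer type decomposition for a rank-deficient polynomial matrix is essentially the entire content of the theorem, and the proposal does not supply it. The removal of ``inner factors responsible for residual rank drop'' is likewise only gestured at, and a separate argument is needed to see that the cleaned-up factor is a polynomial of order $N$ rather than merely a rational function analytic in the disc.

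The paper sidesteps your obstacle by never forming an $m\times m$ factor at all: after arranging that the upper-left $k\times k$ block $S_{00}$ has full rank a.e., it uses the rank-$k$ identity $S_{10}S_{00}^{-1}S_{01}=S_{11}$ to write $S=S_0S_0^*$ with the $m\times k$ rational factor $S_0=\bigl(\begin{smallmatrix}S_{00}^+\\ S_{10}(S_{00}^-)^{-1}\end{smallmatrix}\bigr)$ built from the classical Fej\'er--Riesz factorization of $S_{00}$ alone, then removes poles and interior zeros one at a time with diagonal Blaschke unitaries $u(z)=(z-a)/(1-\overline a z)$ and their reciprocals, and finally shows polynomiality of order $N$ via the identity $z^NS^-(z)=\bigl((S^+)^TS^+\bigr)^{-1}(S^+)^T\,z^NS(z)$. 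Your uniqueness argument is essentially the paper's (the rational transition matrix $U$ is analytic with analytic inverse in the disc and paraunitary, hence constant) and is fine in outline; but to complete the existence proof you would need either to carry out the rectangular inner--outer extraction for $\widetilde S^+$ honestly, or to switch to a construction that produces a $k$-column factor from the start.
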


{\em Remark.} If we require of $S^+$ to be just a rational
matrix-function analytic inside $\mathbb{T}$ and drop the
uniqueness from the condition, then the theorem can be obtained in
a standard algebraic manner (see [9]). Hence, as we will see
below, the proof of the theorem provides a simple proof of the
same theorem for the full rank case, $k=m$, as well. This proof is
even more elementary as compared  with the one given in [2] since
it avoids an application of the Hardy space theory.

\smallskip

Prior to proving the theorem, we make some simple observations on
adjoint functions and prove Lemma 1 on paraunitary
matrix-functions. We do not claim that this lemma is new, but
include its proof for the sake of completeness.

If $f$ is an analytic ${m \times k}$ matrix-function in
$\mathbb{C}\backslash \{z_1,z_2,\ldots,z_n\}$, then its adjoint
$f^*(z)=\overline{f\left(1/\overline{z}\right)}^T$ is an analytic
${k \times m}$ matrix-function in $\mathbb{C}\backslash
\{z_1^*,z_2^*,\ldots,z_n^*\}$, $z^*:=1/\overline{z}$,
$\infty^*=0$. Obviously, if $f$ is analytic inside $\mathbb{T}$,
then $f^*$ is analytic outside $\mathbb{T}$ (including infinity).
Namely, if $f_{ij}(e^{i\theta})\in L_1^+(\mathbb{T})$, ($f_{ij}$
is the $ij$th entry of $f$), then $f_{ji}^*(e^{i\theta})\in
L_1^-(\mathbb{T})$, where $L_1^+(\mathbb{T})$
\big($L_1^-(\mathbb{T})$\big) is the set of integrable functions
defined on $\mathbb{T}$ which have Fourier coefficients with
negative (positive) indices equal to zero. Since $f$ is uniquely
determined by its values on $\mathbb{T}$, and
$f^*(z)=\overline{f(z)}^T=(f(z))^*$ for $|z|=1$, usual relations
for adjoint matrix-functions, like $(fg)^*(z)=g^*(z)f^*(z)$ and
$(f^{-1})^*(z)=(f^*)^{-1}(z)$, etc., are valid.

Note that if $f$ is a rational $m\times m$ matrix-function, $f\in
\mathcal{R}^{m\times m}$, then
\begin{equation}
 \left[f(e^{i\theta})f^*(e^{i\theta})\right]_{ii}\in L_\infty(\mathbb{T})
 \Longrightarrow f_{ij} \text{ are free of poles on }\mathbb{T},\;
j=1,2,\ldots,m,
\end{equation}
($L_\infty(\mathbb{T})$ stands for the set of bounded functions)
since $\left[f(z)f^*(z)\right]_{ii}=\sum_{j=1}^m|f_{ij}(z)|^2$
when $|z|=1$.

$U\in \mathcal{R}^{m\times m}$ is called paraunitary if
\begin{equation}
 U(z)U^*(z)=I_m\;\;\;\text{ in the domain of $U$ and $U^*$},
\end{equation}
where $I_m$ stands for the $m$-dimensional unit matrix. Note that
$U(z)$ is a usual unitary matrix for each $z\in \mathbb{T}$, since
$U^*(z)=\overline{U(z)}^T=(U(z))^*$ when $|z|=1$ and,
consequently,
\begin{equation}
\overline{U(z)}^T=U^{-1}(z),\;\;\;z\in \mathbb{T}.
\end{equation}

\begin{lemma} If $U\in \mathcal{R}^{m\times m}$ is paraunitary
and analytic inside $\mathbb{T}$ $($its entries are free of poles
inside $\mathbb{T}${$)$}, and $U^{-1}\in \mathcal{R}^{m\times m}$
is analytic inside $\mathbb{T}$ as well, then $U$ is a constant
unitary matrix.
\end{lemma}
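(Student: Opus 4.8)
The plan is to prove that $U^{-1}$ is a rational matrix-function with no poles anywhere on the extended plane $\mathbb{C}\cup\{\infty\}$, whence each of its entries is constant by Liouville's theorem, and then to read off from paraunitarity that the resulting constant matrix is unitary. To organise the argument I would treat the interior $|z|<1$, the exterior $|z|>1$ (including $\infty$), and the circle $\mathbb{T}$ separately, checking that each hypothesis controls exactly one of these regions.

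First I would use the defining relation $U(z)U^*(z)=I_m$ to identify $U^{-1}=U^*$. By hypothesis $U^{-1}$ is analytic inside $\mathbb{T}$, which takes care of the interior. For the exterior I would invoke the adjoint relation $U^*(z)=\overline{U(1/\overline z)}^T$ together with the remarks preceding the lemma: since $U$ is analytic inside $\mathbb{T}$, its adjoint $U^*=U^{-1}$ is analytic outside $\mathbb{T}$, including at infinity.

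The only remaining --- and genuinely delicate --- point is the behaviour on $\mathbb{T}$, which the hypotheses do not address directly; this is where observation (3) enters. From $UU^*=I_m$ we have $[U(e^{i\theta})U^*(e^{i\theta})]_{ii}=1\in L_\infty(\mathbb{T})$, so (3) forces every entry $U_{ij}$ to be free of poles on $\mathbb{T}$. Hence $U$ is analytic on the whole closed disc $|z|\le 1$, and consequently $U^*=U^{-1}$, being the adjoint of a function analytic on $|z|\le 1$, is analytic on $|z|\ge 1$, the circle $\mathbb{T}$ included. Combined with the interior analyticity from the hypothesis, this shows that $U^{-1}=U^*$ has no poles at any point of $\mathbb{C}\cup\{\infty\}$.

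A rational function analytic on the entire Riemann sphere is constant, so $U^{-1}$, and therefore $U=(U^{-1})^{-1}$, is a constant matrix. Finally, by (5) the constant matrix $U$ satisfies $\overline{U}^T=U^{-1}$, i.e.\ $U$ is unitary, as claimed. I expect the boundary step to be the main obstacle: a priori $U^{-1}$ might acquire poles on $\mathbb{T}$, and it is precisely observation (3) --- converting boundedness of the diagonal of $UU^*$ on $\mathbb{T}$ into pole-freeness there --- that closes this gap; everything else is bookkeeping across the three regions.
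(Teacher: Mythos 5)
Your proof is correct, but it reaches the conclusion by a different mechanism than the paper's. Both arguments start the same way: paraunitarity together with observation (3) shows that the entries of $U$ are free of poles on $\mathbb{T}$, so $U$ is analytic on the closed disc. From there the paper stays on the boundary circle and works entrywise with Fourier coefficients: $U_{ij}(e^{i\theta})\in L_1^+(\mathbb{T})$, and by (5) together with the analyticity of $U^{-1}$ inside $\mathbb{T}$ also $\overline{U_{ij}(e^{i\theta})}=U^{-1}_{ji}(e^{i\theta})\in L_1^+(\mathbb{T})$, so each entry lies in $L_1^+(\mathbb{T})\cap L_1^-(\mathbb{T})$ and is therefore constant a.e.\ on $\mathbb{T}$, hence everywhere. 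You instead track the pole set of the single rational matrix-function $U^{-1}=U^*$ over the whole Riemann sphere: no poles inside $\mathbb{T}$ by hypothesis, and none on or outside $\mathbb{T}$ (including $\infty$) because $U^*$ is the adjoint of a function analytic on the closed disc, so a pole-free rational function must be constant. Your route is, if anything, more elementary for rational $U$, since it dispenses with Fourier coefficients entirely; the paper's boundary argument has the advantage of carrying over to settings where $U$ is of Hardy class rather than rational. One small point worth making explicit when you pass from ``$U^{-1}$ is constant'' to ``$U$ is constant'': the constant matrix is invertible because $\det U^{-1}(z)\cdot\det U(z)=1$, which is immediate but deserves a word.
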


\begin{proof}
The equation (4) implies that $U_{ij}(z)$, $1\leq i,j\leq m$, are
free of poles on $\mathbb{T}$ (see (3)). Since
$U_{ij}(e^{i\theta})\in L_1^+(\mathbb{T})$ and $ L_1^+(\mathbb{T})
\ni U^{-1}_{ji}(e^{i\theta})=\overline{U_{ij}(e^{i\theta})}$ (see
(5)), we have $U_{ij}(e^{i\theta})\in L_1^+(\mathbb{T})\cap
L_1^-(\mathbb{T})$. Thus $U_{ij}(z)$ is constant for a.a. $z\in
\mathbb{T}$, and hence everywhere in the complex plane.
\end{proof}

\smallskip

{\em Proof of Theorem $1$.} Since $S$ is nonnegative definite on
the unit circle, we have $S^*(z)=S(z)$,
$z\in\mathbb{C}\backslash\{0\}$.

Observe that every polynomial matrix-function always has a
constant rank in its domain except for a finite number of points.
Without loss of generality, we can assume that the $k\times k$
left-upper submatrix of $S$, denoted by $S_{00}$, has the full
rank $k$ (a.e.) so that $S$ has the block matrix form
$$
S(z)=\left(\begin{matrix} S_{00}(z)&S_{01}(z)\\S_{10}(z)&S_{11}(z)
\end{matrix}\right),
$$
where $S_{01}$, $S_{10}=S^*_{01}$ and $S_{11}$ are
matrix-functions of dimensions $k\times(m-k)$, $(m-k)\times k$,
and $(m-k)\times(m-k)$, respectively. Since every $k+1$ rows
(columns) of $S(z)$ are linearly dependent, we have
\begin{equation}
S_{10}(z)S^{-1}_{00}(z)S_{01}(z)=S_{11}(z)\;\;\;\;\;\;\text{(a.e.)}.
\end{equation}
Let
\begin{equation}
S_{00}(z)=S^+_{00}(z)S^-_{00}(z)=S^+_{00}(z)(S^+_{00})^*(z)
\end{equation}
be the polynomial spectral factorization of $S_{00}$ which exists
by virtue of the matrix-valued Fej\'{e}r-Riesz theorem. Define
$$
\sigma_{10}(z):=S_{10}(z)(S^-_{00}(z))^{-1}
$$
and let $S_0$ have the block matrix form
$$
S_0(z)=\left(\begin{matrix} S^+_{00}(z)\\ \sigma_{10}(z)
\end{matrix}\right).
$$
Then $S_0^*(z)= \left[\begin{matrix} S^-_{00}(z)&
(S^+_{00}(z))^{-1}S_{01}(z) \end{matrix}\right]$ and, taking (6)
into account, one can directly check that
\begin{equation}
 S(z)=S_0(z)S_0^*(z).
\end{equation}
Since $S^+_{00}$ is a polynomial matrix-function, $S_0$ is a
rational matrix-function, however it  might not be analytic inside
$\mathbb{T}$. If $s_{ij}$ is the $ij$th entry of $S_0$ with a pole
at $a$ inside $\mathbb{T}$, then we can multiply $S_0$ by the
unitary matrix-function $U(z)=\diag[1,\ldots,u(z),\ldots,1]$,
where $u(z)=(z-a)/(1-\overline{a}z)$ is the $jj$th entry of
$U(z)$, so that the $ij$th entry of the product $S_0(z)U(z)$ will
not have a pole at $a$ any longer keeping the factorization (8):
$$
(S_0U)(z)(S_0U)^*(z)=S_0(z)S_0^*(z)=S(z).
$$
In the same way one can remove every pole of the entries of $S_0$
at points inside $\mathbb{T}$. Thus  $S$ can be represented as a
product
\begin{equation}
 S(z)=S^+_0(z)S_0^-(z),
\end{equation}
where $S^+_0$ is a rational matrix-function which is analytic
inside $\mathbb{T}$, and $S_0^-(z)$ is its adjoint. Note that
$S^+_0(z)$ remains of full rank $k$ for each $z\in \mathbb{T}$
except possibly a finite number of points.

Now, it might happen so that $S_0^+$ is not of full rank $k$
inside $\mathbb{T}$ everywhere. If $|a|<1$ and $rank\,
S_0^+(a)<k$, then there exists a unitary matrix $U$ such that the
product $S_0^+(a)U$ has all 0's in the first column. Hence $a$ is
a zero of every entry of the first column of the matrix-function
$S_0^+(z)U$ and the product $S_1^+(z):=
S_0^+(z)U\diag[u(z),1,\ldots,1]$, where
$u(z)=(1-\overline{a}z)/(z-a)$, remains analytic inside
$\mathbb{T}$. While the factorization (9) remains true replacing
$S_0^+$ and $S_0^-$ by $S_1^+$ and $S_1^-$, respectively, the
minors of $S_1^+$ will have less zeros inside $\mathbb{T}$ than
the minors of $S_0^+$. Thus, continuing this process if necessary,
we get the factorization
\begin{equation}
 S(z)=S^+(z)S^-(z),
\end{equation}
where $S^+$ is a rational matrix-function which is analytic and of
full rank $k$ inside $\mathbb{T}$.

Now let us show that $S^+$ is in fact a polynomial matrix-function
of order $N$. It suffices to show that $z^NS^-(z)$ is analytic
inside $\mathbb{T}$. Indeed, since $S^+$ does not have poles on
$\mathbb{T}$ (see (10) and (3)), $z^NS^-(z)$ should be an analytic
(on the whole $\mathbb{C}$) rational matrix-function in this case,
and therefore a polynomial.

It follows form (10) that
\begin{equation}
z^NS^-(z)=\left((S^+(z))^TS^+(z)\right)^{-1}\cdot (S^+(z))^T\cdot
z^NS(z)
\end{equation}
and $z^NS^-(z)$ is analytic inside $\mathbb{T}$ since each of the
three factors on the right-hand side of (11) is such.

To complete the proof of the theorem, it remains to show that the
factorization (2) is unique, i.e. if
$$
 S(z)=S_1^+(z)S_1^-(z)
$$
where $S_1^+$ is a $m\times k$ polynomial matrix-function which
has the full rank $k$  inside $\mathbb{T}$, then
$$
S_1^+(z)=S^+(z)U
$$
for some $k\times k$ (constant) unitary matrix  $U$.

Since $S^+(z)$ is of the full rank  $k$ for each $z\in\mathbb{C}$
except for some finite number of singular points, there exists a
matrix-function $U(z)$ such that
\begin{equation}
S_1^+(z)=S^+(z)U(z)
\end{equation}
Thus $U(z)$ can be determined by the equation
$$
U(z)=\left((S^+)^T(z)S^+(z)\right)^{-1} (S^+)^T(z) S_1^+(z)
$$
as a rational function in $\mathbb{C}$. Note that $U(z)$ is
analytic inside $\mathbb{T}$, and since $S^+$ and $S_1^+$
participate symmetrically in the theorem, $U^{-1}(z)$ is analytic
inside $\mathbb{T}$ as well.

Due to Lemma 1, it remains to show that $U\in \mathcal{R}^{k\times
k}$ is  a paraunitary matrix-function. From the equation (12), one
can determine $U(z)$ as
$$
U(z)=\left(S^-(z)S^+(z)\right)^{-1} S^-(z) S_1^+(z)
$$
and, consequently,
$$
U^*(z)= S_1^-(z) S^+(z)\left(S^-(z)S^+(z)\right)^{-1}.
$$
Hence
\begin{gather*}
U(z)U^*(z)=\left(S^-(z)S^+(z)\right)^{-1} S^-(z) S_1^+(z)\cdot
S_1^-(z) S^+(z)\left(S^-(z)S^+(z)\right)^{-1}=\\
=\left(S^-(z)S^+(z)\right)^{-1} S^-(z) S^+(z)S^-(z)
S^+(z)\left(S^-(z)S^+(z)\right)^{-1}=I_k\,.
\end{gather*}
The proof of the theorem is complete.

\smallskip

{\em Remark.} As one can observe, the above proof of the existence
of $S^+$ is constructive. There are several classical algorithms
to perform the factorization (7) numerically in the full rank case
(a new efficient algorithm of such type is proposed in [5]).
Further using the steps described in the proof, one can compute
$S^+$ numerically.

\smallskip

Our next theorem illustrates one of the   applications of Theorem
1  in some areas of signal processing. Namely, $m\times m$
paraunitary matrix-functions
\begin{equation}
 U(z)=\sum_{n=0}^N
 \rho_nz^n=\big[u_{ij}(z)\big]_{i,j=\overline{1m}}\;\,,\;\;\;\;
 \rho_n\in\mathbb{C}^{m\times m},
\end{equation}
defined by (4) play an important role in the theory of wavelets
and multirate filter banks [8] where they are known under
different names, for example, lossless systems [11], perfect
reconstruction $m$-filters [7], paraunitary $m$-channel filters
[10], and so on. The positive integers $m$ and $N$ are called the
{\em size} and the {\em length} of $U$, respectively. Sometimes,
the first row of a matrix-function $U$ is called the {\em low-pass
filter}, and the remaining rows are called the {\em high-pass
filters}. Theorem 2 allows us to find the set of matching
high-pass filters to each low-pass filter. First we give a simple
proof of the following lemma which provides additional information
about structures of paraunitary matrix-polynomials.

\begin{lemma}
{\rm (cf. [8, Lemma 4.13])} Let $(13)$ be a paraunitary
matrix-polynomial of length $N$ $(\rho_N\not=\mathbf{0})$. Then
\begin{equation}
\det U(z)=c\cdot z^k, \text{ where } |c|=1, \text{ and } k\geq N.
\end{equation}
\end{lemma}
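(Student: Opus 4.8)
The plan is to analyze the structure of the determinant of a paraunitary matrix-polynomial by exploiting both the paraunitary relation and the polynomial nature of $U$. Let me sketch the key observations I would assemble.

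First I would establish that $\det U(z)$ is a monomial of modulus-one leading coefficient. Setting $d(z) := \det U(z)$, I observe that $d$ is itself a polynomial (as a determinant of a matrix-polynomial). Applying $\det$ to the paraunitary relation $U(z)U^*(z) = I_m$ from $(4)$ gives $d(z)\cdot \overline{d(1/\overline z)} = 1$, so $d(z)\,d^*(z) = 1$, where $d^*(z) = \overline{d(1/\overline z)}$ is the scalar adjoint. Since $d$ is a polynomial, $d^*(z) = z^{-\deg d}\,(\text{polynomial})$, so the relation $d(z)d^*(z)=1$ forces $d$ to have no zeros in $\mathbb{C}\setminus\{0\}$: a zero of $d$ at some $a \neq 0$ would produce a pole of $d^*$ at $1/\overline a$, contradicting that the product is identically $1$. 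A polynomial with no nonzero roots must be a monomial, so $d(z) = c\,z^k$ for some integer $k \geq 0$ and constant $c$. Evaluating on $\mathbb{T}$, where $U(z)$ is unitary by $(5)$, gives $|d(z)| = 1$, whence $|c| = 1$.

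Next I would pin down the inequality $k \geq N$. The essential point is that the length $N$ (the highest power appearing in $U(z) = \sum_{n=0}^N \rho_n z^n$ with $\rho_N \neq \mathbf{0}$) must be controlled by the degree of the determinant. Here the natural tool is $U^{-1}(z) = U^*(z)$, valid wherever both are defined. By Cramer's rule, $U^{-1}(z) = d(z)^{-1}\,\mathrm{Cof}(U(z))^T = c^{-1} z^{-k}\,\mathrm{Cof}(U(z))^T$, where the cofactor matrix has polynomial entries. On the other hand $U^*(z) = \sum_{n=0}^N \rho_n^* z^{-n}$, so $U^{-1}(z) = U^*(z)$ has entries that are polynomials in $z^{-1}$ of degree (in $z^{-1}$) at most $N$, with the coefficient of $z^{-N}$ being $\rho_N^*$. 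Comparing, $z^{-k}\,\mathrm{Cof}(U(z))^T$ must equal $c\,\sum_{n=0}^N \rho_n^* z^{-n}$; since the cofactor entries are genuine polynomials (nonnegative powers of $z$), the lowest power of $z$ appearing on the left is $z^{-k}$, and matching the term $z^{-N}$ from the right—whose coefficient $\rho_N^*$ is nonzero by hypothesis—forces $-k \leq -N$, i.e. $k \geq N$.

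The step I expect to be the main obstacle is making the degree comparison in the previous paragraph fully rigorous, specifically confirming that the coefficient of $z^{-N}$ in $U^*(z)$ is genuinely nonzero and correctly identifying it with a nonvanishing contribution on the cofactor side. Since $\rho_N \neq \mathbf{0}$ means $\rho_N^*\neq \mathbf{0}$, some entry of $U^*$ truly contains a nonzero $z^{-N}$ term, so the representation $U^{-1}(z) = c^{-1}z^{-k}\,\mathrm{Cof}(U(z))^T$ must supply a pole of order at least $N$ at the origin in that entry; as $\mathrm{Cof}(U(z))^T$ is a polynomial (analytic at $0$), the pole can only come from the factor $z^{-k}$, giving $k \geq N$. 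I would phrase this by evaluating the order of the pole at $z=0$ of the two sides of $U^*(z) = c^{-1}z^{-k}\,\mathrm{Cof}(U(z))^T$, reading off $k \geq N$ directly. The remaining details—closure properties of the scalar adjoint and the standard Cramer identity—are routine and follow from the adjoint-function observations $(3)$–$(5)$ already recorded in the excerpt.
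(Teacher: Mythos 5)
Your proposal is correct and follows essentially the same route as the paper: deduce from $\det U(z)\cdot\det U^*(z)=1$ and the polynomiality of $\det U$ that $\det U(z)=cz^k$ with $|c|=1$, then compare the order of the pole at the origin of $U^*(z)=\sum_{n=0}^N\rho_n^*z^{-n}$ (which is $N$ since $\rho_N^*\neq\mathbf{0}$) with that of $c^{-1}z^{-k}\bigl(\Cof U(z)\bigr)^T$ to conclude $k\geq N$. One small slip in an auxiliary remark: a zero of $d=\det U$ at $a\neq 0$ produces a \emph{zero} of $d^*$ at $1/\overline{a}$, not a pole; the intended contradiction is obtained by evaluating $d(z)d^*(z)=1$ at $z=a$ itself, where $d^*$ is finite (its only pole being at $0$) while $d(a)=0$.
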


\begin{proof}
Since $\det U(z)\cdot\det U^*(z)=1$ and $\det U(z)$ is a
polynomial, it follows that $\det U(z)=cz^k$ for some nonnegative
integer $k$. We have
\begin{equation}
\sum_{n=0}^N
 \rho^*_nz^{-n}= U^*(z)=U^{-1}(z)=\frac1{\det U(z)}\big(\Cof U(z)\big)^T=
 cz^{-k} \big(\Cof U(z)\big)^T.
\end{equation}
Therefore $k\geq N$, since $\Cof U(z)$ is a polynomial
matrix-function and $\rho^*_N$ is not the zero matrix.
\end{proof}

{\em Remark.} The positive integer $k$ in (14) is called the {\em
degree} of $U$. Generically, a paraunitary matrix-polynomial $U$
of length $N$ has the same degree $N$, although in some specific
cases the degree is more than $N$.

\smallskip

The following theorem was first established in [3] by a different
method, however the presented approach gives a new insight to the
problem.

\begin{theorem} For any polynomial vector-function
\begin{equation}
 U_1(z)=\big[u_{11}(z),u_{12}(z),\ldots,u_{1m}(z)\big],
\end{equation}
$u_{1j}(z)=\sum_{n=0}^N \alpha_{jn}z^n$, $j=1,2,\ldots,m$, of
length $N$  $\big(\sum_{j=1}^m |\alpha_{jn}|>0\big)$ which is of
unit norm on $\mathbb{T}$
\begin{equation}
 \|U_1(z)\|_{\mathbb{C}^m}^2=\sum_{j=1}^m |u_{1j}(z)|^2=1,\;\;\;\;z\in \mathbb{T},
\end{equation}
there exists a unique $($up to a constant left multiplier of the
block matrix form $\left(\begin{matrix}
1&0\\0&U\end{matrix}\right)$, where $U$ is a $(m-1)\times (m-1)$
unitary matrix$)$ paraunitary matrix-function $U(z)$ $($of size
$m$ and length $N${}$)$, with determinant
 $cz^N$, $|c|=1$, whose first row is equal to $ (16)$.
\end{theorem}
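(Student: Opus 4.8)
The plan is to regard rows $2,\dots,m$ of the desired $U$ as the unknown ``high-pass'' filters and to build $U$ by \emph{induction on the length} $N$, peeling off one elementary paraunitary factor at a time. (One could instead produce \emph{some} paraunitary completion directly from Theorem~1, applied to the nonnegative-definite rank-$(m-1)$ trigonometric polynomial $I_m-U_1^{*}U_1$; but, as explained at the end, that route does not control the determinant, so the inductive construction is preferable.)

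For the inductive step, write $\alpha_N=(\alpha_{1N},\dots,\alpha_{mN})^{T}$ and $\alpha_0=(\alpha_{10},\dots,\alpha_{m0})^{T}$, put $v=\overline{\alpha_N}/\|\alpha_N\|$ (recall $\alpha_N\neq\mathbf 0$), and set $G(z)=I_m+(z-1)vv^{*}$, a degree-one paraunitary factor with $\det G(z)=z$. I would first check that $\widetilde U_1(z):=U_1(z)G^{-1}(z)=U_1(z)+(z^{-1}-1)\bigl(U_1(z)v\bigr)v^{*}$ is again a polynomial row of unit norm, now of length $N-1$. Two cancellations make this work: the choice $v\parallel\overline{\alpha_N}$ kills the coefficient of $z^{N}$, while the would-be pole at $z=0$ disappears because the coefficient of $z^{N}$ in $U_1U_1^{*}\equiv 1$ is $\langle\alpha_N,\alpha_0\rangle$, which $(17)$ forces to vanish; hence $(U_1v)(0)=\overline{\langle\alpha_N,\alpha_0\rangle}/\|\alpha_N\|=0$. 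Since $U_1=\widetilde U_1G$ and $\deg U_1=N$, the length of $\widetilde U_1$ is exactly $N-1$.

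By the induction hypothesis $\widetilde U_1$ is the first row of a paraunitary $\widetilde U$ of length $N-1$ with $\det\widetilde U=c'z^{N-1}$ (the base case $N=0$ being the extension of a constant unit vector to a constant unitary matrix). Then $U:=\widetilde U\,G$ is paraunitary, has first row $\widetilde U_1G=U_1$, has all entries of degree $\le N$ — so length exactly $N$, since the first row already has degree $N$ — and satisfies $\det U=\det\widetilde U\cdot\det G=c'z^{N}$, as required. For uniqueness I would show the reduction is \emph{canonical}: for any admissible $U$ the identity $(\Cof U)^{T}=\det U\cdot U^{-1}=cz^{N}U^{*}$ gives, at $z=0$, $\mathrm{adj}\bigl(U(0)\bigr)=c\,\rho_N^{*}$; since $\det U(0)=0$ this adjugate has rank one, so the leading coefficient $\rho_N$ has rank one with every row proportional to $\alpha_N^{T}$. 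Consequently $\overline{\alpha_N}\in\ker U(0)$ (it is, up to $c$, the first column of $\mathrm{adj}(U(0))$) and $\rho_N(I_m-vv^{*})=0$, which are exactly the two facts needed to see that $UG^{-1}$ is once more a length-$(N-1)$ completion of $\widetilde U_1$. Thus $U\mapsto UG^{-1}$ is a bijection between completions of $U_1$ and of $\widetilde U_1$, and the claimed freedom $\diag(1,Q)$ is inherited from the inductive hypothesis; the base case is just the uniqueness of an orthonormal complement up to a unitary rotation. (Alternatively, comparing two completions $U,\widehat U$ one forms the paraunitary $\widehat U U^{-1}=\diag(1,Q)$ and appeals to Lemma~1 and Lemma~2 after checking the requisite analyticity.)

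The main obstacle is \emph{not} existence but pinning the determinant to $cz^{N}$, i.e.\ producing the completion of \emph{minimal} degree. A spectral factorization $I_m-U_1^{*}U_1=S^{+}(S^{+})^{*}$ from Theorem~1 followed by the adjoint/$z^{N}$ twist needed to make the factor analytic does give a paraunitary matrix of length $N$, but its determinant has degree $(m-1)N$, which for $m\ge 3$ exceeds $N$ (consistent with Lemma~2 and $(14)$). The degree-one peeling is precisely what keeps the determinant degree equal to the length; the two cancellations in the reduction, and the rank-one structure of $\rho_N$ used for uniqueness, are the delicate points I would verify with care.
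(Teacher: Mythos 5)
Your proof is correct, but it takes a genuinely different route from the paper's. The paper obtains the completion in one shot: it applies Theorem 1 to the rank-$(m-1)$ nonnegative trigonometric polynomial $S(z)=I_m-U_1^T(z)(U_1^T)^*(z)$, appends $(S^+)^T$ below $U_1$, and then pins the determinant down to $cz^N$ via the cofactor identity (22): because Theorem 1 guarantees that $S^+(0)$ has rank $m-1$, some cofactor $\cof\big(u_{1j}(z)\big)$ has nonzero constant term, which forces $k\le N$ in $\det U=cz^k$; uniqueness likewise comes from the uniqueness clause of Theorem 1 applied to the last $m-1$ rows. You instead run an induction on the length, peeling off the degree-one paraunitary factor $G(z)=I_m+(z-1)\mathbf{v}\mathbf{v}^*$ with $\mathbf{v}=\overline{\alpha_N}/\|\alpha_N\|$ (the classical order-reduction of lossless systems, cf.\ [10]). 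Your two cancellations (the leading coefficient killed by the choice of $\mathbf{v}$; the pole at $0$ killed because $\alpha_N^T\overline{\alpha_0}$ is the $z^N$-Fourier coefficient of $U_1U_1^*\equiv 1$, hence zero) check out, as does the rank-one structure of $\rho_N$ extracted from $\mathrm{adj}\big(U(0)\big)=c\,\rho_N^*$, and the canonical bijection $U\mapsto UG^{-1}$ correctly reduces both existence and uniqueness to length $N-1$. What your approach buys is complete independence from Theorem 1 --- it is elementary and algorithmic; what it forgoes is precisely the point of the paper, namely exhibiting the high-pass filters as a rank-deficient spectral factor. (Your parenthetical alternative uniqueness argument via $\widehat U U^{-1}$ and Lemma 1 would still need the analyticity of $z^{-N}\widehat U(\Cof U)^T$ at $0$ to be checked, but your main inductive argument does not rely on it.)

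One correction to your closing paragraph: your stated reason for rejecting the direct route --- that the Theorem-1 construction yields a determinant of degree $(m-1)N$ --- is mistaken. By Lemma 2 the determinant of any length-$N$ paraunitary polynomial is $cz^k$ with $k\ge N$, and the paper's cofactor argument, using that $S^+$ is of full rank at $z=0$, gives $k\le N$; so the direct construction already produces determinant exactly $cz^N$. This does not affect the validity of your proof, only the motivation you offer for preferring the inductive one.
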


\begin{lemma} Let $\mathbf{v}=(v_1,v_2,\ldots,v_m)^T\in
\mathbb{C}^m$ be a vector of unit norm,
$\|\mathbf{v}\|^2=\mathbf{v}^*\mathbf{v}= \sum_{j=1}^m|v_j|^2=1$.
Then $I_m-\mathbf{v}\mathbf{v}^*$ is a nonnegative definite
matrix,
\begin{equation}
I_m-\mathbf{v}\mathbf{v}^*\geq 0,
\end{equation}
and
\begin{equation}
rank\,\big(I_m-\mathbf{v}\mathbf{v}^*\big)=m-1.
\end{equation}
\end{lemma}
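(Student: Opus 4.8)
The plan is to read $P:=\mathbf{v}\mathbf{v}^*$ as the orthogonal projection onto the line spanned by $\mathbf{v}$ and to deduce both assertions from this. First I would record the two algebraic facts that make $P$ a projection: it is Hermitian, $P^*=(\mathbf{v}\mathbf{v}^*)^*=\mathbf{v}\mathbf{v}^*=P$, and it is idempotent, since $P^2=\mathbf{v}(\mathbf{v}^*\mathbf{v})\mathbf{v}^*=\mathbf{v}\mathbf{v}^*=P$ by the normalization $\mathbf{v}^*\mathbf{v}=1$. Thus $I_m-\mathbf{v}\mathbf{v}^*$ is itself a Hermitian projection (the complementary one), and everything reduces to identifying its eigenspaces.

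For the inequality $(19)$ I would simply evaluate the quadratic form on an arbitrary $\mathbf{x}\in\mathbb{C}^m$:
$$
\mathbf{x}^*\big(I_m-\mathbf{v}\mathbf{v}^*\big)\mathbf{x}
=\|\mathbf{x}\|^2-\big|\mathbf{v}^*\mathbf{x}\big|^2\ge 0,
$$
where the inequality is the Cauchy--Schwarz estimate $|\mathbf{v}^*\mathbf{x}|\le\|\mathbf{v}\|\,\|\mathbf{x}\|=\|\mathbf{x}\|$, valid because $\|\mathbf{v}\|=1$. This gives $I_m-\mathbf{v}\mathbf{v}^*\ge 0$ at once.

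For the rank assertion $(20)$ I would determine the kernel. A vector $\mathbf{x}$ satisfies $\big(I_m-\mathbf{v}\mathbf{v}^*\big)\mathbf{x}=\mathbf{0}$ exactly when $\mathbf{x}=(\mathbf{v}^*\mathbf{x})\mathbf{v}$, i.e. precisely when $\mathbf{x}$ is a scalar multiple of $\mathbf{v}$; equivalently, equality holds in the Cauchy--Schwarz step above only on the line $\mathbb{C}\mathbf{v}$. Hence $\ker\big(I_m-\mathbf{v}\mathbf{v}^*\big)=\mathbb{C}\mathbf{v}$ is one-dimensional, and the rank--nullity theorem yields $rank\,\big(I_m-\mathbf{v}\mathbf{v}^*\big)=m-1$. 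One could instead diagonalize: $P$ has eigenvalue $1$ on $\mathbf{v}$ and $0$ on $\mathbf{v}^{\perp}$, so $I_m-P$ has eigenvalue $0$ once and $1$ with multiplicity $m-1$, which simultaneously re-proves $(19)$ and $(20)$.

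There is no genuine obstacle here; the lemma is elementary linear algebra. The only point meriting a moment's care is the exact equality case in Cauchy--Schwarz, which is what pins the kernel down to a single dimension and thereby fixes the rank at $m-1$ rather than merely bounding it from below.
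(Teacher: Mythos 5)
Your proposal is correct and follows essentially the same route as the paper: both establish (18) by evaluating the quadratic form $\mathbf{x}^*(I_m-\mathbf{v}\mathbf{v}^*)\mathbf{x}=\|\mathbf{x}\|^2-|\mathbf{x}^*\mathbf{v}|^2$ and applying Cauchy--Schwarz, and both obtain (19) from the equality case, which confines the null space to the line $\mathbb{C}\mathbf{v}$. Your added observation that $\mathbf{v}\mathbf{v}^*$ is a Hermitian idempotent (an orthogonal projection) is a pleasant supplementary viewpoint but not a different proof.
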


\begin{proof}
For each column vector $\mathbf{x}\in \mathbb{C}^m$, we have
$$
\mathbf{x}^*(I_m-\mathbf{v}\mathbf{v}^*)\mathbf{x}=\|\mathbf{x}\|^2-
|\mathbf{x}^*\mathbf{v}|^2\geq \|\mathbf{x}\|^2-
\|\mathbf{x}^*\|^2\|\mathbf{v}\|^2=\|\mathbf{x}\|^2-\|\mathbf{x}^*\|^2=0.
$$
Hence (18) holds and
$\mathbf{x}^*(I_m-\mathbf{v}\mathbf{v}^*)\mathbf{x}=0$ if and only
if $\mathbf{x}=\alpha \mathbf{v}$ for some $\alpha\in \mathbb{C}$.
Thus (19) holds as well.
\end{proof}

{\em Proof of Theorem $2$.} Due to Lemma 3 and the property (17),
the matrix-function
\begin{equation}
S(z)=I_m-U_1^T(z)(U_1^T)^*(z)
\end{equation}
is positive definite and of rank $m-1$ for each $z\in \mathbb{T}$.
(Note that the order of $S$ is less than or equal to $N$.) Hence,
by virtue of Theorem 1, there exists an $m\times(m-1)$
matrix-function $S^+(z)$ of full rank $m-1$, for each $z$ inside
$\mathbb{T}$, such that (2) holds. Consequently,
$$
\left[\begin{matrix} U_1^T(z) & S^+(z)\end{matrix}\right]
\left[\begin{matrix} (U_1^T)^*(z) \\ S^-(z)\end{matrix}\right]=I_m
$$
and
$$
U(z)=\left[\begin{matrix} U_1(z) \\ (S^+)^T(z)\end{matrix}\right]
$$
is the paraunitary matrix-function we wanted to find. Indeed,
clearly $U(z)$ is of size $m$ and length $N$, and we show that
\begin{equation}
\det U(z)=c\cdot z^N, \;\;\;|c|=1.
\end{equation}

Due to Lemma 2, $\det U(z)=cz^k$, $|c|=1$, for some positive
integer $k\geq N$. Hence (see (15))
\begin{equation}
\sum_{n=0}^N \overline{\alpha_{jn}}z^{-n}=u_{1j}^*(z)=c\cdot
z^{-k}\cdot\cof\big(u_{1j}(z)\big),\;\;\;j=1,2,\ldots,m.
\end{equation}
Since $S^+(0)$ is of rank $m-1$, then $
\cof\big(u_{1j}(0)\big)\not=0$ for at least one
$j\in\{1,2,\ldots,m\}$ so that the first coefficient of the
polynomial $ \cof\big(u_{1j}(z)\big)$ differs from $0$ for at
least one $j$.  Thus it follows from (22) that $k\leq N$ and hence
$k=N$, which yields (21). The desired $U(z)$ is found and let us
show its uniqueness.

Assume now that $U(z)$ is any  $m\times m$ paraunitary polynomial
matrix-function, with the first row (16), which satisfies (21),
and let $U_{m-1}(z)$ be the $(m-1)\times m$ matrix-polynomial
which is formed by deleting the first row in $U(z)$. It is obvious
that $U^T_{m-1}(z)$ is an $m\times(m-1)$ polynomial spectral
factor of (20) so that, by virtue of Theorem 1, we get
 $U^T_{m-1}(z)=S^+(z)U\Longleftrightarrow
U(z)=\left(\begin{matrix}1&0\\0&U\end{matrix}\right)
 \left(\begin{matrix} U_1(z) \\ (S^+)^T(z)\end{matrix}\right)$
immediately after  we establish that $U_{m-1}(z)$ is of full rank
$m-1$ for each $z$ inside $\mathbb{T}$. But $rank\,
U_{m-1}(z)=m-1$ for any $z\not=0$ since (21) implies that $rank\,
U(z)=m$, $z\not=0$, and $\sum_{n=0}^N
\overline{\alpha_{jn}}\,z^{-n}=u_{1j}^*(z)=c\,z^{-N}\,\cof\big(u_{1j}(z)\big)$
(see (22)), $\alpha_{jN}\not=0$, implies that
$\cof\big(u_{1j}(0)\big)\not=0$, which means that $rank\,
U_{m-1}(0)=m-1$.

\end{document}